 \newtheorem{pro}{Proposition}[section]
 \newtheorem{lem}[pro]{Lemma}
 \newtheorem{theo}[pro]{Theorem}
 \newtheorem{thm}[pro]{Theorem}
 \newcommand{\B}{\mathbb B}
 \newcommand{\R}{\mathbb R}
 \newcommand{\C}{\mathbb C}
 \newcommand{\N}{\mathbb N}
 \newcommand{\e}{\varepsilon}
 \newcommand{\f}{\varphi}
 \newcommand{\p}{\psi}
 \newcommand \de {\delta}
 \newcommand \psh {plurisubharmonic }
  \newcommand \Ec{\mathcal E }
 \numberwithin{equation}{section}
\begin{document}

  \title[Weak subsolutions to complex Monge-Amp\`ere equations]{Weak subsolutions to complex\\ Monge-Amp\`ere equations}

\setcounter{tocdepth}{1}

  \author{ Vincent Guedj, Chinh H. Lu, Ahmed Zeriahi}

\address{Institut de Math\'ematiques de Toulouse   \\ Universit\'e Paul Sabatier \\
118 route de Narbonne \\
F-31062 Toulouse cedex 09, France\\}

\email{vincent.guedj@math.univ-toulouse.fr}

\address{Laboratoire de Math\'ematiques d'Orsay\\
 Univ. Paris-Sud\\
 CNRS, Universit\'e Paris-Saclay\\
  91405 Orsay, France.}

\email{hoang-chinh.lu@math.u-psud.fr}

\address{Institut de Math\'ematiques de Toulouse  \\ Universit\'e Paul Sabatier \\
118 route de Narbonne \\
F-31062 Toulouse cedex 09, France\\}

\email{ahmed.zeriahi@math.univ-toulouse.fr}

 \date{\today \\
 The authors are partially supported by the ANR project GRACK}

 \begin{abstract}  
We compare various notions of weak subsolutions to degenerate complex Monge-Amp\`ere equations,
showing that they all coincide.
This allows us to give an alternative proof of mixed Monge-Amp\`ere inequalities due to Kolodziej and Dinew.
\end{abstract}

 \maketitle


\section{Introduction}
Let $\Omega$ be a domain of $\C^n$.
We consider in this article the notion of subsolution
for degenerate complex Monge-Amp\`ere equations in $\Omega$.
These are bounded \psh functions which satisfy
$$
(dd^c \f)^n \geq f dV,
$$
 where $dV$ denotes the Lebesgue measure and $0 \leq f \in L^1(\Omega)$.
 
 This inequality can be interpreted in various senses 
 (pluripotential sense \cite{BT76}, viscosity sense \cite{EGZ11}, distribution sense \cite{HL13})
 and the goal of this note is to show that all point of views do coincide.
 
  \medskip
\noindent {\bf Main theorem.}
{\it 
Assume $\f $ is \psh and bounded. The following are equivalent :

 \smallskip
 
 (i) $ (dd^c \f)^n \geq f dV$ in the pluripotential sense;
 
 \smallskip
 
 (ii) $ (dd^c (\f \star \chi_\e))^n \geq (f^{1/n} \star \chi_\e)^n dV$ in the classical sense, for all $\e>0$;
 
 \smallskip
 
  (iii) $\Delta_H \f \geq f^{1/n}$ in the sense of distributions,  for all $H \in {\mathcal H}$.
}
\medskip

\noindent The operator $dd^c=a i \partial \bar \partial$  is here normalized so that 
$d V = (dd^c \vert z\vert^2)^n$ is the euclidean volume form on $\C^n$. Thus for a smooth function $\f$,  
$$
 (dd^c \f)^n = \text{det} \left(\frac{\partial^2 \f}{\partial z_j \partial \bar {z}_k }\right) d V.
$$

We let ${\mathcal H}$ denote the space of hermitian positive definite matrix
$H$ that are normalized by $\det H=1$, and let $\Delta_H$ denote the Laplace operator 
$$
\Delta_H \f:=\frac{1}{n}\sum_{j,k=1}^n h_{jk} \frac{\partial^2 \f}{\partial z_j \partial \bar{z}_k}.
$$

The functions $\chi_\e$ are standard mollifiers, i.e. radial smooth non-negative functions with compact
support in the $\e$-ball centered at the origin, and such that $\int \chi_\e dV=1$.
It is then classical that the convolutions $\f \star \chi_\e$ are smooth, \psh, and decrease
to $\f$ as $\e$ decreases to $0$.
 
  When $f$ is moreover continuous, one can also interpret the inequality
$ (dd^c \f)^n \geq f dV$ in the viscosity sense, as shown in \cite[Proposition 1.5]{EGZ11}.

Our main theorem easily implies the following result of Kolodziej
 \cite[Lemma 1.2]{Kol03} (see also \cite{Din09,DL15}) :

   \medskip
\noindent {\bf Corollary.}
{\it 
Assume $\f_1,\ldots,\f_n $ are bounded \psh functions in $\Omega$, such that
$(dd^c \f_i)^n \geq f_i dV$, where $0 \leq f_i \in L^1(\Omega)$. Then 
$$
dd^c \f_1 \wedge \cdots \wedge dd^c \f_n \geq f_1^{1/n} \cdots f_n^{1/n} dV.
$$
 
}
\bigskip

The note is organized as follows.
We start by extending Kolodziej's subsolution theorem (see Theorem \ref{thm:SubSol}), providing a solution to special Monge-Amp\`ere equations that we are going to use in the sequel.
 We prove our main result in {\it Section \ref{sec:main}}. The starting point is an identification of viscosity subsolutions and
 pluripotential subsolutions obtained in \cite{EGZ11}. 
 We connect these identifications to mixed Monge-Amp\`ere inequalities in {\it Section \ref{sec:mixed}}
 and propose some generalizations in {\it Section \ref{sec:moregeneralrhs}}.

 \section{The subsolution theorem}
 
 Let $\Omega \Subset \C^n$ be a bounded hyperconvex domain
 (in the sequel we only need to deal with the case when $\Omega$ is a ball). 
 Let $\mu$ be a Borel measure on $\Omega$. If there exists a function $v \in PSH (\Omega) \cap L^{\infty} (\Omega)$ such that
 $$
 \mu \leq (dd^c v)^n
 \text{ in } \Omega, \;
 \text{ with }
 \;
  \lim_{\Omega \ni z \to \zeta} v (z) = 0, \, \, \forall \zeta \in \partial \Omega,
 $$
 then it was proved by S. Kolodziej  \cite{Kol95} that  there exists a
 unique  solution $\psi \in  PSH (\Omega) \cap L^{\infty} (\Omega)$ to the equation
 \[
 (dd^c \psi)^n =  \mu,  
\]
such that $\lim_{\Omega \ni z \to \zeta} \p (z) = 0$.  We need the following generalization:
 
 \begin{theo}  \label{thm:SubSol} 
 Assume $\mu$ is a non pluripolar Borel measure on $\Omega$ which has finite total mass.  
 Then there exists a unique function  $\f \in \mathcal F^1 (\Omega)$ satisfying  
 \begin{equation} \label{eq:MA}
 (dd^c \f)^n = e^{\f} \mu 
 \;  \text{ in } \; 
 \Omega.
 \end{equation}
 
Moreover if $\mu$ satisfies $\mu \leq (dd^c u)^n$ in $\Omega$, for some bounded negative psh function $u$,  then the solution  $\f \in PSH (\Omega)$  
is bounded with $ u \leq \f$. In particular, if $u\mid_{\partial \Omega}=0$ then $\f\mid_{\partial \Omega} = 0$.
\end{theo}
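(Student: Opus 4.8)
\medskip

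The plan is to solve \eqref{eq:MA} first for measures dominated by the \MA mass of a bounded potential, and then to recover the general case by a monotone approximation kept under control by an \emph{a priori} energy estimate. Uniqueness will be immediate from the factor $e^\f$: if $\f,\f'\in\mathcal F^1(\Om)$ both solve \eqref{eq:MA}, then on $\{\f<\f'\}$ one has $(dd^c\f')^n=e^{\f'}\mu\ge e^{\f}\mu=(dd^c\f)^n$, while the comparison principle in $\mathcal F^1$ forces $\int_{\{\f<\f'\}}(dd^c\f')^n\le\int_{\{\f<\f'\}}(dd^c\f)^n$; hence $e^{\f'}=e^{\f}$ holds $\mu$-almost everywhere on $\{\f<\f'\}$, so $\mu(\{\f<\f'\})=0$, and the domination principle then gives $\f'\le\f$ there, whence $\f=\f'$ by symmetry.

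For existence I would first invoke Cegrell's decomposition theorem to write $\mu=h\,(dd^cw)^n$ with $w\in\mathcal E_0(\Om)$ (in particular bounded) and $0\le h\in L^1\big((dd^cw)^n\big)$, and set $\mu_j:=\min(h,j)\,(dd^cw)^n\nearrow\mu$. Since $\mu_j\le(dd^c(j^{1/n}w))^n$, Kolodziej's theorem \cite{Kol95} applies to each truncated problem, and a standard fixed-point argument built on it (the order-reversing map sending $v$ to the unique $\mathcal E_0(\Om)$-solution of $(dd^c\psi)^n=e^{v}\mu_j$ leaves invariant the convex compact set $\{v\in PSH(\Om):j^{1/n}w\le v\le0\}$, so one concludes by Schauder) produces $\f_j\in\mathcal E_0(\Om)$ with $(dd^c\f_j)^n=e^{\f_j}\mu_j$; the comparison computation above then shows that $j\mapsto\f_j$ is non-increasing, so $\f_j\searrow\f$. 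The quantitative heart of the proof is the elementary bound $-t\,e^{t}\le 1/e$ on $(-\infty,0]$: since $\f_j\le0$,
\[
\int_\Om(-\f_j)\,(dd^c\f_j)^n=\int_\Om(-\f_j)\,e^{\f_j}\,d\mu_j\le\frac{\mu(\Om)}{e},\qquad\int_\Om(dd^c\f_j)^n=\int_\Om e^{\f_j}\,d\mu_j\le\mu(\Om),
\]
so the $\f_j$ have uniformly bounded energies.

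Next I would pass to the limit. The uniform energy bounds ensure, by Cegrell's theory of $\mathcal F^1$, that the decreasing limit $\f$ belongs to $\mathcal F^1(\Om)$ (in particular $\f\not\equiv-\infty$) and that $(dd^c\f_j)^n\to(dd^c\f)^n$ weakly; since $0\le e^{\f_j}\min(h,j)\le h\in L^1((dd^cw)^n)$ and $e^{\f_j}\min(h,j)\to e^{\f}h$ pointwise $(dd^cw)^n$-almost everywhere, dominated convergence yields $e^{\f_j}\mu_j\to e^\f\mu$, and \eqref{eq:MA} follows. For the last assertion, if $\mu\le(dd^cu)^n$ with $u$ bounded negative psh, then $(dd^cu)^n\ge\mu_j\ge(dd^c\f_j)^n$ on $\Om$; as $\f_j\to0$ on $\partial\Om$ while $u\le0$, the set $\{\f_j<u\}$ is relatively compact, Bedford--Taylor comparison forces $(dd^cu)^n=(dd^c\f_j)^n$ on it, hence $\mu_j(\{\f_j<u\})=0$, and the domination principle gives $u\le\f_j$; letting $j\to\infty$ we get $u\le\f$, so $\f$ is bounded, and $\f|_{\partial\Om}=0$ as soon as $u|_{\partial\Om}=0$.

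The step I expect to be the main obstacle is this limiting argument: one has to be certain that a decreasing sequence with uniformly bounded \MA energies cannot escape $\mathcal F^1(\Om)$ and that $(dd^c\cdot)^n$ is continuous along it — this is precisely where Cegrell's compactness and continuity results for $\mathcal F^1$ are indispensable — and one has to identify the weak limit of the right-hand sides $e^{\f_j}\mu_j$ correctly. By contrast, the \emph{a priori} estimate is trivial once one spots the inequality $-t\,e^{t}\le 1/e$, and solving the truncated equations is routine given Kolodziej's theorem.
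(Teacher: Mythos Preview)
Your argument is correct but follows a genuinely different route from the paper. The paper proves existence by a \emph{variational method}: it maximizes the functional $F_\mu(\phi)=\frac{1}{n+1}\int_\Om\phi\,(dd^c\phi)^n-\int_\Om e^\phi\,d\mu$ over $\mathcal E^1(\Om)$, showing it is upper semi-continuous on energy sublevel sets and coercive (since $F_\mu\le E_1$), and then identifies the maximizer as a critical point via the projection/envelope trick of \cite{BBGZ13}. No approximation of $\mu$ is needed, and the energy bound comes for free from coercivity. Your approach instead goes through Cegrell's decomposition $\mu=h\,(dd^cw)^n$, truncation $\mu_j=\min(h,j)(dd^cw)^n$, a Schauder fixed point on top of Ko\l odziej's bounded-subsolution theorem, and a monotone limit controlled by the explicit estimate $\int(-\f_j)(dd^c\f_j)^n\le\mu(\Om)/e$. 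This is essentially the strategy of \cite{CK06}, which the paper explicitly sets out to replace; what you gain is a very concrete energy bound and a constructive approximation, at the cost of having to justify the fixed point (continuity of the solution map) and the passage to the limit in $\mathcal F^1$. The uniqueness arguments are the same in spirit; for the final boundedness statement the paper appeals directly to \cite[Corollary 2.4]{BGZ09}, which is a cleaner way to conclude $u\le\f$ than the comparison--domination chain you sketch for each $\f_j$ (your line ``hence $\mu_j(\{\f_j<u\})=0$'' actually needs the full chain $e^{\f_j}\mu_j=\mu_j=\mu=(dd^cu)^n$ on $\{\f_j<u\}$, not just $(dd^cu)^n=(dd^c\f_j)^n$, to force $\mu_j=0$ there).
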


Before entering into the proof let us recall the definition of Cegrell's finite energy classes. We refer the reader to \cite{Ce98,Ce04} for more details. 

A domain $\Omega$ is called hyperconvex if there exists a continuous plurisubharmonic exhaustion function $\rho: \Omega \rightarrow \mathbb{R} \cup\{-\infty\}$ such that the sublevel sets $\{\rho <-c\}$ are relatively compact in $\Omega$, for all constants $c>0$.

Let $u$ be a negative plurisubharmonic function in $\Omega$. We recall the following definitions :
\begin{itemize}
	\item $u\in \Ec_0(\Omega)$ if $u$ is bounded in $\Omega$, $u$ vanishes on the boundary, i.e. $\lim_{z\to \partial \Omega}u(z)=0$, and $\int_{\Omega} (dd^cu)^n<+\infty$. 
	\item $u \in \mathcal{E}(\Omega)$  if  for each $z_0\in \Omega$ there exists an open neighborhood $z_0\in V_{z_0}\Subset \Omega$ and a decreasing sequence $(u_j)\subset \mathcal{E}_0(\Omega)$ such that $u_j$ converges to $u$ in $V_{z_0}$ and $\sup_j \int_{\Omega}(dd^c u_j)^n<+\infty$.  
	\item $u \in \Ec^p(\Omega), p>0$ if there exists a sequence $(u_j)$ in $\Ec_0(\Omega)$ decreasing to $u$ and satisfying 
\[
\sup_{j\in \N}\int_{\Omega}(-u_j)^p (dd^c u_j)^n <+\infty. 
\]
If we ask additionally that $\int_{\Omega} (dd^c u_j)^n$ is uniformly bounded then by definition $u\in \mathcal{F}^p(\Omega)$. 
\end{itemize}

It was proved in \cite{Ce98,Ce04} that the Monge-Amp\`ere operator $(dd^c)^n$ is well-defined for functions in $\mathcal{E}(\Omega)$. Moreover, it was shown in \cite[Theorem A]{BGZ09} that if $u\in \mathcal{E}(\Omega)$ then $(dd^c \max(u,-j))^n$ converges in the strong sense of Borel measures in $\Omega\cap \{u>-\infty\}$ to $(dd^c u)^n$. 
\medskip

Theorem \ref{thm:SubSol} was proved in \cite{CK06} using a fixed point argument. We provide in this note an alternative proof using the variational method, adapting the techniques developed in K\"ahler geometry in \cite{BBGZ13} (similar ideas have been used   in \cite{ACC12, Lu15}).

\begin{proof}[Proof of Theorem \ref{thm:SubSol}]
Consider 
 $$
  F_{\mu} (\phi) := E_1 (\phi)  - \int_{\Omega} e^{\phi} d \mu, \, \, \phi \in \mathcal E^{1} (\Omega),  
 $$
 where
 $$
 E_1 (\phi) := \frac{1}{n + 1} \int_{\Omega} \phi (dd^c \phi)^n.
 $$

The Euler-Lagrange equation of  $F_{\mu}$ can be computed as follows.
Fix  $\phi \in \mathcal E^{1} (\Omega)$ and assume  $(\phi (t))$ is a smooth path in $\mathcal E^{1} (\Omega)$ 
starting at $\phi (0) = \phi $ with   $\dot \phi(0) = v \in C (X)$.
It follows from Stokes theorem that
 $$
 \frac{d }{d t} E_1 (\phi (t))|_{t = 0} = \int_{\Omega} v (dd^c \phi)^n\cdot,
 $$
hence
 $$
 \frac{d }{d t} F_{\mu} (\phi (t))|_{t = 0} = \int_{\Omega}   v (dd^c \phi)^n - \int_{\Omega} v e^{\phi} d \mu.
 $$

 Thus $\phi$ is a critical point of the functional $F_{\mu}$
 if it is a solution to the complex Monge-Amp\`ere equation (\ref{eq:MA}).
 It is thus natural to try and extremize $F_{\mu}$ in order to solve (\ref{eq:MA}).
We proceed in three steps :
 
  \smallskip
  
 {\it Step 1: Upper semi-continuity of  $ F_\mu$.}
Observe first that the functional $J (\phi) := - E_1 (\phi) = \vert E_1 (\phi)\vert$ is a positive proper functional on the space $\mathcal E^1 (\Omega)$ i.e. its sublevel subsets 
$$
 \mathcal E^1_B (\Omega) := \{ \phi \in \mathcal E^1 (\Omega) ; 0 \leq J (\phi) \leq B\}, \, \, B > 0
 $$
are compact for the $L^1$-topology. Moreover the functional $E_1$ is upper semi-continuous on each compact subset $\mathcal E^1_B (\Omega) $ for the $L^1 $-topolgy.

 The  continuity of the functional $L_\mu : \phi \longmapsto  \int_\Omega e^\phi d \mu $ on  
 each compact subset $\mathcal E^1_B (\Omega) $ follows from the following  fact due to Cegrell \cite{Ce98}, \cite[Lemma 4.1]{ACC12} : if $\phi_j \to \phi$ in $\mathcal E^1_B (\Omega) $ then $\phi_j \to \phi$ $\mu$-a.e., hence by Lebesgue's convergence theorem, 
 $\lim _j L_{\mu} (\phi_j) = L_{\mu} (\phi)$ (we use here the fact that $\mu$ is non-pluripolar).
 
 This proves that $F_\mu$ is upper semi-continuous on each $\mathcal E^1_B (\Omega)$.

 \smallskip
 
 {\it Step 2: Coercivity of $ F_\mu$.}
Observe that $0 \leq e^\f \leq 1$ for $\f \in \mathcal E^1 (\Omega)$, hence
 $$
  F_\mu (\phi) \leq E_1 (\phi).
 $$
We infer that $F_\mu$ is $J-$proper on $\mathcal{E}^1 (\Omega)$, i.e.
 $$
 \lim_{J (\phi) \to + \infty }  F_{\mu} (\phi) = - \infty.
 $$ 
 This implies that the maximum  of $F_\mu$ on $\mathcal E^1 (\Omega)$ is localized at a finite level of energy,
  i.e. there exists a constant $B > 0$ such that
 $$
 \sup \{F_\mu (\phi) ; \phi \in \mathcal E^1 (\Omega)\} = \sup \{F_\mu (\phi) ; \phi \in \mathcal E^1_B (\Omega)\},
 $$

Since $F_\mu $ is upper semi-continuous on the compact set $\mathcal E^1_B (\Omega)$,  there exists 
 $\phi \in \mathcal E_B^1 (\Omega)$ which maximizes $\mathcal F_\mu$ on $  \mathcal E^1_B (\Omega)$ i.e.
 $$
 F_\mu (\phi) = \inf \{ F_\mu (\psi) ; \psi \in \mathcal E_B^1 (\Omega)\}.
  $$
  
  \smallskip
  
{\it Step 3:  $\phi$ is a critical point of $F_\mu$}. 
Fix a continuous 
test function $ \chi $ with compact support in $\Omega$ and set $ \phi (t)  := P _{\Omega} (\phi + t \chi)$ for $t \geq 0$, 
where $P_\Omega u$ denotes the plurisubharmonic envelope of $u$ in $\Omega$. 

Observe that $\phi (t) \in \mathcal E^1 (\Omega)$. Indeed let $\rho$ be a continuous psh exhaustion for $\Omega$ 
such that $\rho < \chi$ on the support of $\chi$. Then   $\phi + t \rho \leq  \phi (t)$ for $t \geq 0$ close to $0$. 
Since $\phi + t \rho \in \mathcal E^1 (\Omega)$, it follows that $\phi (t) \in \mathcal E^1 (\Omega)$.
Set 
$$
h (t) := E_1 (\phi (t)) - \int_{\Omega} e^{\phi + t \chi} d \mu.
$$ 
Then since $\phi (t) \leq \phi + t \chi$, it follows that $h (t)  \leq F_{\mu} (\phi (t)) \leq F_{\mu} (\phi)$ which means that $h$ achieves its maximum at the point $0$.

On the other hand we know by \cite{BBGZ13,ACC12,Lu15} that
$$
 \frac{d}{dt} h (t)\mid_{t = 0} = \int_\Omega \chi  (dd^c \phi)^n  - \int_\Omega \chi e^{\phi} d \mu.
$$
Since $h$ achieves its maximum at the point $0$, we have $h'(0) = 0$, hence
$$
\int_\Omega \chi  (dd^c \phi)^n  =  \int_\Omega \chi e^{\phi} d \mu.
$$ 

As the test function $\chi$ was arbitrary, 
this means that the function $\phi$ is a solution of the equation (\ref{eq:MA}).  As $\mu$ has finite total mass we actually have that $\varphi\in \mathcal{F}^1(\Omega)$.

We now prove the uniqueness. If $\psi \in \mathcal{F}^1(\Omega)$ is another solution to \eqref{eq:MA} then it follows  from the comparison principle \cite[Lemma 4.4]{Ce98} that
\begin{eqnarray*}
 \int_{\{\f < \psi\}} e^{\psi} d \mu &= & \int_{\{\f < \psi\}} (dd^c \psi)^n  \leq  \int_{\{\f < \psi\}} (dd^c \f)^n \\
 &= & \int_{\{\f < \psi\}} e^{\f} d \mu \leq \int_{\{\f < \psi\}}  e^{\psi} d \mu.
\end{eqnarray*}
We infer $\int_{\{\f < \psi\}} (e^{\psi} - e^{\varphi}) d \mu = 0$ hence $\psi \leq \f$, $\mu$-almost everywhere and $(dd^c \f)^n$-almost everywhere in $\Omega$. For each $\varepsilon>0$, since $(dd^c \varphi)^n$ vanishes in $\{\varphi\leq \psi-\varepsilon\}\subset \{\varphi<\psi\}$, it follows from \cite[Theorem 2.2]{BGZ09} that 
\[
	(dd^c \max(\varphi,\psi-\varepsilon))^n \geq  {\bf 1}_{\{\varphi>\psi-\varepsilon\}} (dd^c \varphi)^n	 = (dd^c \varphi)^n.
\]

It then follows from the comparison principle \cite[Theorem 4.5]{Ce98} that $\max(\varphi,\psi-\varepsilon) \leq \varphi$, for all $\varepsilon>0$, hence $\psi \leq \varphi$. Reversing the role of $\varphi$ and $\psi$ in the above argument gives $\varphi=\psi$, proving the uniqueness. 

\smallskip

Assume finally that  $\mu\leq (dd^c u)^n$, where $u$ is a bounded negative psh function in $\Omega$.  Then since $\varphi\leq 0$ we have $(dd^c \varphi)^n=e^{\varphi}\mu \leq \mu \leq (dd^c u)^n$. Since $u$ is bounded (in particular it belongs to the domain of definition of the complex Monge-Amp\`ere operator) and $\varphi\in \mathcal{F}^1(\Omega)$ with $(dd^c \varphi)^n$ putting no mass on pluripolar sets, it follows from \cite[Corollary 2.4]{BGZ09} that $u\leq \varphi$.  In particular $\varphi$ vanishes on the boundary $\partial \Omega$ if $u$ does so.

\end{proof}

\section{The main result}

\subsection{Proof of the main result}  \label{sec:main}

Given $\f$ a plurisubharmonic function in a domain $\Omega$, we let 
$$
\f_{\e}(z)=\f  \star \chi_{\e}(z):=\int_{\C^n} \f(z-\e w) \chi(w) dV(w)=\int_{\C^n} \f(w) \chi_{\e}(z - w) dV(w)
$$
denote the standard regularizations of $\f$ defined in $\Omega_\e$ for $\e > 0$ small enough,
where
$\Omega_\e=\{ z \in \Omega , \; {\rm dist}(z,\partial \Omega)>\e\}$.

Here $\chi_{\e}$ are  non-negative radial functions with compact support in the ball $\B(\e)$ of radius $\e$
and such that $\int_{\C^n} \chi_{\e} dV=1$, where $dV$ denotes the euclidean volume form.
The first expression shows that $\f_{\e}$ is a (positive) sum of plurisubharmonic functions (hence  itself plurisubharmonic) in $\Omega_\e$,
while the second expression shows that $\f_{\e}$ is smooth in $\Omega$.

\subsubsection{The implication $(ii) \Rightarrow (i)$}

The mean value property shows that the $\f_{\e}$'s
decrease to $\f$ as $\e$ decreases to zero. It follows therefore from Bedford-Taylor's continuity results \cite{BT76,BT82}
that   $(ii) \Rightarrow (i)$ holds.

\subsubsection{The equivalence $(ii) \Leftrightarrow (iii)$}

The starting point of $(iii)$ is the classical interpretation of the determinant as
an infimum of traces :

\begin{lem} \label{lem:gav}
$$
(\det Q)^{1/n}=\inf\{ n^{-1} \rm{tr } (HQ) \, ; \, H \in {\mathcal H} \}.
$$
 \end{lem}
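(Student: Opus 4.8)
The plan is to deduce the identity from the arithmetic--geometric mean inequality applied to the eigenvalues of a suitably symmetrized matrix; throughout, $Q$ denotes a Hermitian positive semidefinite matrix, which is the only case relevant to the application ($Q$ being a complex Hessian). For the lower bound, fix $H \in {\mathcal H}$. Since $H$ is Hermitian positive definite it admits a Hermitian positive definite square root $H^{1/2}$, and by cyclicity of the trace $\operatorname{tr}(HQ) = \operatorname{tr}(H^{1/2}QH^{1/2})$. The matrix $M := H^{1/2}QH^{1/2}$ is Hermitian positive semidefinite with $\det M = \det H \cdot \det Q = \det Q$, so, writing $\mu_1,\dots,\mu_n \ge 0$ for its eigenvalues and applying AM--GM,
\[
\frac1n \operatorname{tr}(HQ) = \frac1n\sum_{j=1}^{n}\mu_j \ \ge\ \Big(\prod_{j=1}^{n}\mu_j\Big)^{1/n} = (\det M)^{1/n} = (\det Q)^{1/n},
\]
which shows that the infimum is $\ge (\det Q)^{1/n}$.

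For the upper bound I would distinguish two cases. If $Q$ is positive definite, set $H := (\det Q)^{1/n}\, Q^{-1}$; this is Hermitian positive definite, $\det H = (\det Q)\,\det(Q^{-1}) = 1$, hence $H \in {\mathcal H}$, and $\frac1n \operatorname{tr}(HQ) = (\det Q)^{1/n}\,\frac1n \operatorname{tr}(I_n) = (\det Q)^{1/n}$, so the infimum is attained and equals $(\det Q)^{1/n}$. If $Q$ is degenerate, then $(\det Q)^{1/n} = 0$ and it suffices to produce a minimizing family: diagonalizing $Q = U\operatorname{diag}(q_1,\dots,q_{n-1},0)U^{*}$ with $U$ unitary and $q_i \ge 0$, set $H_t := U\operatorname{diag}(t,\dots,t,t^{1-n})U^{*} \in {\mathcal H}$ for $t>0$; then $\frac1n \operatorname{tr}(H_t Q) = \frac{t}{n}(q_1+\dots+q_{n-1}) \to 0$ as $t \to 0^{+}$.

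I do not expect any real obstacle here: the whole content is the reduction $\operatorname{tr}(HQ)=\operatorname{tr}(H^{1/2}QH^{1/2})$ followed by AM--GM. The only point requiring a separate (and elementary) treatment is the degenerate case $\det Q = 0$, where no extremal $H$ exists and one must instead exhibit a minimizing family; if one only needed the statement for strictly positive $Q$, the proof would reduce to the first paragraph together with the first case above.
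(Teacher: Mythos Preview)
Your argument is correct and complete. The paper does not actually prove this lemma: it is stated as a classical fact (``the classical interpretation of the determinant as an infimum of traces'') and used without justification, so there is no proof in the paper to compare yours against. Your reduction via $\operatorname{tr}(HQ)=\operatorname{tr}(H^{1/2}QH^{1/2})$ together with AM--GM on the eigenvalues is the standard route; the explicit minimizer $H=(\det Q)^{1/n}Q^{-1}$ in the nondegenerate case and the family $H_t$ in the degenerate case are exactly what is needed. One cosmetic remark: in the degenerate case your diagonalization writes a single zero eigenvalue, but the same construction works verbatim if several $q_i$ vanish, since only $q_n=0$ is used.
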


 We first show that $(iii) \Rightarrow (ii)$.
Indeed assume that
$$
\Delta_H \f \geq f^{1/n}
$$
for all positive definite hermitian matrix $H$ normalized by $\det H=1$.
Since $\Delta_H$ is a linear operator, we infer
$$
\Delta_H (\f \star \chi_{\e} ) \geq 
f^{1/n} \star \chi_{\e}.
$$
This inequality holds for all normalized $H$, hence Lemma \ref{lem:gav} yields
$$
(dd^c (\f \star \chi_{\e}))^n \geq (f^{1/n} \star \chi_{\e})^n dV,
$$
 where this inequality holds in the classical (pointwise, differential) sense.
 
 We conversely check that $(ii) \Rightarrow (iii)$.
 Since $\f \star \chi_{\e}$ is smooth,  Lemma \ref{lem:gav} shows indeed that
 $$
\Delta_H (\f \star \chi_{\e}) \geq 
f^{1/n} \star \chi_{\e}
$$
 for all normalized $H \in {\mathcal H}$. Letting $\e \rightarrow 0$ 
 and taking limits in the sense of distributions yields $(iii)$.

\subsubsection{The implication $(i) \Rightarrow (ii)$}

We finally focus on the most delicate implication.

{\it Step 1}.
Assume first that $(dd^c \f)^n \geq f dV$, with $f$ continuous.
This inequality can be here interpreted equivalently in the pluripotential or in the viscosity sense,
as shown in \cite[Proposition 1.5]{EGZ11}, whose proof moreover shows the equivalence with
the property that
$$
\Delta_H \f \geq f^{1/n} 
\text{  for all } H \in {\mathcal H}.
$$
Thus $(i) \Leftrightarrow (iii)$ in our main theorem, when $f$ is continuous.
Since any lower semi-continuous function is the increasing limit of continuous functions,
the implication $(i) \Rightarrow (iii)$ immediately extends to the case when $f$ is lower semi-continuous.

It remains to get rid of this extra continuity assumption.
We  are going to approximate $f$ by continuous densities $f_k$, use 
the previous result and stability estimates to conclude.
The approximation process, inspired by \cite{Ber13}, is somehow delicate, so we proceed in several steps.

\smallskip

{\it Step 2.} 
Note first  that we can assume that $f$ is {\it bounded} :
we can replace $f$ by $\min(f,A) \in L^{\infty}(\Omega)$
and let eventually $A$ increase to $+\infty$.
Since the problem is local, we can work on fixed balls $B'\Subset B$ and 
use a max construction to modify $\f$ in a neighborhood of the boundary $\partial B$,
 making it equal to the defining function of $B$.

We fix $0 < \de < 1$ and $j \in \N^*$.
Since $f \in L^2 (B) \supset L^{\infty}(B)$, it follows from \cite{CP92,Kol95} that there exists $U_f \in PSH (B) \cap C^0 (\bar B)$ such that 
$$
(dd^ c U_f)^ n = f d V, \, \, \, U_f= 0 \, \, \, \text{in} \, \, \, \partial B.
$$
Set $C_j := \sup e^{- j\f \slash n}$ and observe that
 $$
  e^ {- j \f}  \left\{f dV+\de (dd^c \f)^n\right\} \leq (dd^c v)^n
 $$
where $v := C_j (U_f + \f)$ is bounded, plurisubharmonic, with $v = 0$ on $\partial B$. 

 By Theorem \ref{thm:SubSol}  there exists a unique bounded  \psh solution $\f_{j,\delta}$ to 
the Dirichlet problem
\begin{equation} \label{eq:kol}
(dd^c \f_{j,\delta})^n=e^{j(\f_{j,\delta}-\f)} \left\{ f dV+\de (dd^c \f)^n \right\}
\end{equation}
in $B$ with boundary values $0$.

 We now observe that $\f_{j,\delta}$ uniformly converges to $\f$, as $j \rightarrow +\infty$, independently of 
 the value of $\delta>0$ :

\begin{lem} \label{lem: approx1}
For all $j \geq 1,\de \in (0,1) ,z \in B$,
$$
\f(z)- \frac{\log (1+ \delta)}{j} \leq \f_{j,\de}(z) \leq \f(z)+ \frac{(-\log \de)}{j}.
$$
\end{lem}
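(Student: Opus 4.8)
The plan is to obtain the two inequalities separately via the comparison principle, comparing $\f_{j,\de}$ with suitable translates of $\f$ itself. For the \textbf{lower bound}, set $\psi := \f - \frac{\log(1+\de)}{j}$. I claim $\psi$ is a subsolution of \eqref{eq:kol}, i.e. $(dd^c \psi)^n \geq e^{j(\psi - \f)}\{f\,dV + \de(dd^c\f)^n\}$. Indeed $(dd^c\psi)^n = (dd^c\f)^n \geq f\,dV$, while $e^{j(\psi-\f)} = (1+\de)^{-1}$, so the right-hand side is $(1+\de)^{-1}\{f\,dV + \de(dd^c\f)^n\} \leq (1+\de)^{-1}\{(dd^c\f)^n + \de(dd^c\f)^n\} = (dd^c\f)^n = (dd^c\psi)^n$. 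Since both $\psi$ and $\f_{j,\de}$ are bounded psh with boundary value $0$ (recall $\psi\mid_{\partial B} = -\frac{\log(1+\de)}{j} \leq 0$; one should compare on $\{\psi \geq \f_{j,\de}\}$ where the boundary discrepancy is in the favourable direction), the comparison principle of \cite{Ce98} applied to the equation $(dd^c\cdot)^n = e^{j\cdot}\,d\nu$ with $d\nu := e^{-j\f}\{f\,dV + \de(dd^c\f)^n\}$ — which has the same structure as \eqref{eq:MA} up to the fixed scaling by $j$ — yields $\psi \leq \f_{j,\de}$.

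For the \textbf{upper bound}, set $\eta := \f + \frac{-\log\de}{j}$ (note $-\log\de > 0$ since $\de \in (0,1)$). I claim $\eta$ is a supersolution: $(dd^c\eta)^n = (dd^c\f)^n$, whereas the right-hand side evaluated at $\eta$ is $e^{j(\eta-\f)}\{f\,dV + \de(dd^c\f)^n\} = \de^{-1}\{f\,dV + \de(dd^c\f)^n\} = \de^{-1}f\,dV + (dd^c\f)^n \geq (dd^c\f)^n = (dd^c\eta)^n$. So on $\{\f_{j,\de} \geq \eta\}$ one runs the comparison principle in the reverse direction. Here the boundary values again cooperate: $\eta\mid_{\partial B} = \frac{-\log\de}{j} \geq 0 = \f_{j,\de}\mid_{\partial B}$, so the set $\{\f_{j,\de} > \eta\}$ is relatively compact in $B$, and the comparison principle gives $\f_{j,\de} \leq \eta$ there, hence everywhere.

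The main technical point to be careful about is the precise form of the comparison principle one invokes: \eqref{eq:kol} is not of the bare form $(dd^c u)^n = d\nu$ but rather $(dd^c u)^n = e^{ju}\,d\nu$ with a strictly increasing weight in $u$, so the relevant statement is the comparison principle for equations with monotone right-hand side (as used already in the uniqueness part of the proof of Theorem \ref{thm:SubSol}); the monotonicity of $t \mapsto e^{jt}$ is exactly what forces the inequality between sub/super-solution and the genuine solution to go the right way. One must also check that the measure $d\nu = e^{-j\f}\{f\,dV + \de(dd^c\f)^n\}$ is well-defined and finite, which it is since $f \in L^\infty(B)$, $\f$ is bounded, and $(dd^c\f)^n$ has finite mass on the ball $B$. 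Everything else is the routine bookkeeping of boundary values indicated above, and the uniform independence from $\de$ of the error $\frac{\log(1+\de)}{j} \leq \frac{\log 2}{j}$ is then immediate from $\de < 1$.
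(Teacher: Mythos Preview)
Your proof is correct and follows essentially the same route as the paper: both use the translates $\f - \frac{\log(1+\de)}{j}$ and $\f + \frac{-\log\de}{j}$ as sub- and supersolution to \eqref{eq:kol}, verify the required differential inequalities from $(dd^c\f)^n \geq f\,dV$, and conclude via the comparison principle. Your discussion of the boundary values and the monotone right-hand side is slightly more detailed than the paper's, but the argument is the same.
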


\begin{proof}
It follows from the comparison principle that 
$\f_{j,\delta}$ is the envelope of subsolutions.
 It thus suffices to find good sub/supersolutions
to insure that $\f_{j,\delta}$ converges to $\f$, as $j \rightarrow +\infty$.

Observe that $u=\f-\frac{\log (1+\de)}{j} \leq \f$ is \psh in $B$, with boundary values
$u_{| \partial B} \leq 0$. Moreover
$$
(dd^c u)^n=(dd^c \f)^n=e^{j(u-\f)} (1+\de) (dd^c \f)^n
\geq e^{j(u-\f)} \left\{ f dV+\de (dd^c \f)^n \right\},
$$
since $(dd^c \f)^n \geq f dV$.
Thus $u$ is a subsolution to the Dirichlet problem, showing that $u \leq \f_{j,\delta}$.

Set now $v=\f+ \frac{(-\log \de)}{j}$.
This is a \psh function in $B$ such that $v \geq 0$ on $\partial B$ and
$$
(dd^c v)^n=(dd^c \f)^n=e^{j(v-\f)} \de  (dd^c \f)^n
\leq e^{j(v-\f)} \left\{ f dV+\de (dd^c \f)^n \right\}.
$$
Thus $v$ is a supersolution of the Dirichlet problem hence $\f_{j,\de} \leq v$.
\end{proof}

\smallskip

{\it Step 3.}
 We now approximate $f$ in $L^2$ by continuous  densities $0 \leq f_k$, with
 $||f_k-f||_{L^2} \rightarrow 0$ as $k \rightarrow +\infty$.
 Extracting and relabelling, we can assume that there exists
 $g \in L^2 (B)$ such that $f_k \leq g$   for all $k \in \N$ and $f_k$ converges almost everywhere to $f$. 
Arguing as above we obtain
$$
 e^ {- j \f}  \left\{f_k dV+\de (dd^c \f)^n\right\} \leq (dd^c v)^n,
 $$
 where $v := C_j (U_g + \f)$ is bounded, plurisubharmonic, with $v = 0$ in $\partial B$. 
By Theorem \ref{thm:SubSol},
there exists a unique bounded  \psh solution $\f_{j,\delta,k}$ to the Dirichlet problem
$$
(dd^c \f_{j,\delta,k})^n=e^{j(\f_{j,\de,k}-\f)} \left\{ f_k dV+\de (dd^c \f)^n \right\}
$$
in $B$, with zero  boundary values. 

The comparison principle shows that for all $k \in \N$,
\begin{equation}\label{eq: uniform-est0}
C_j (U_g + \f) \leq  \f_{j,\de,k} \leq 0.	
\end{equation}
Thus $k \longmapsto \f_{j,\de,k}$ is uniformly bounded in $B$.
 Extracting and relabelling, we can assume that   it converges 
to a plurisubharmonic function $\psi = \psi_{j,\delta}$ in $L^1 (\Omega)$  such that 
 \begin{equation} \label{eq: uniform-est}
 C_j (U_g + \f) \leq  \psi_{j,\delta} \leq 0.
 \end{equation}

We claim that $\psi_{j,\delta} = \f_{j,\delta}$ in $B$.
To simplify notations we write $u_k := \varphi_{j,\delta, k}$ and $u := \psi_{j,\delta}$. 
From  \eqref{eq: uniform-est0} and \eqref{eq: uniform-est}, it follows that  $u_k =u= 0$ in $\partial B$. 
On the other hand let $\tilde u_\ell := (\sup_ {k \geq \ell} u_k)^*$ for $\ell \in \N$. 
This is a decreasing sequence of bounded \psh functions converging to $u$ in $B$. 
We infer for all $\ell$,
$$
(dd^c \tilde u_\ell)^n \geq e^{\inf_{k \geq \ell} j (u_k - \f)} \inf_{k \geq \ell} (f_k d V + \delta (dd^c \f)^n).
$$
Letting $\ell \rightarrow +\infty$ yields
$$
(dd^c u)^n \geq e^{ j (u - \f)} \left\{f d V + \delta (dd^c \f)^n \right\},
$$
which implies that $u = \psi_{j,\delta}$ is a subsolution to the Dirichlet problem for the equation (\ref{eq:kol}).
Hence $\psi_{j,\delta} \leq \f_{j,\delta}$.

By \cite{Kol95} there exists a bounded plurisubharmonic function  $\rho_k$ in $B$, solution to the   Dirichlet problem
$$
 (dd^c \rho_k)^n = e^{j(\f_{j,\de}-\f)} \vert f - f_k \vert dV, 
 \text{ with } {\rho_k}_{|\partial B} =0,
$$
with the   uniform bound
$$
\Vert \rho_k \Vert_{L^{\infty} (B)} \leq C \Vert f - f_k\Vert_{L^2 (B)}^{1 \slash n},
$$ 
where $C > 0$ is independent of $k$. In particular $\rho_k \to 0$ uniformly in $B$.

Since $ f_k \leq f + \vert f - f_k\vert$ and $\rho_k\leq 0$ it follows that  $w := \f_{j,\delta} + \rho_k $ satisfies
$$
(dd^c w)^n = (dd^c (\f_{j,\delta} + \rho_k))^n \geq e^{j(w-\f)} (f_k dV+ \delta (dd^c \f)^n).
$$
The comparison principle insures 
$w \leq \f_{j,\delta,k}$ hence
$\f_{j,\delta} \leq \psi_{j,\delta}$ since $\rho_k \to 0$.

\smallskip

{\it Conclusion.}
We have thus shown that $ \psi_{j,\delta} = \f_{j,\delta}$  and $\f_{j,\delta,k}  \geq \f_{j,\delta}  + \rho_k$.
 Lemma~\ref{lem: approx1} yields 
 $$
 j (\f_{j,\delta,k} - \f) \geq - \log (1 + \delta) - j \eta_k, 
 $$
 where  $\eta_k := \Vert \rho_k \Vert_{L^{\infty} (B)} \stackrel{k \rightarrow +\infty}{\rightarrow} 0$.

Since $f_k$ is continuous we can   apply {\it Step 1} to insure that 
 $$
 (dd^c \f_{j,\delta,k} \star\chi_{\e})^n \geq \frac{e^{- j \eta_k}}{\delta + 1} (f_k^{1/n} \star\chi_{\e})^n dV.
 $$
 
We know that $\limsup_{k \to + \infty} \f_{j,\delta,k} \leq \f_{j,\delta}$ since  $\f_{j,\delta,k} \to \f_{j,\delta}$ in $L^1(B)$  as $k \to + \infty$.
Since  $\f_{j,\delta,k}  \geq \f_{j,\delta}  - \eta_k $ and $\lim_{k \to + \infty} \eta_k = 0$, it follows from Hartogs lemma that 
$\f_{j,\delta,k} \to \f_{j,\delta}$ in capacity. Letting $k \to + \infty$ we   obtain
$$
 (dd^c \f_{j,\delta} \star \chi_\e)^n \geq \frac{1}{\delta + 1} (f^{1/n} \star\chi_\e)^n dV.
 $$
 By Lemma~\ref{lem: approx1}  $(\f_{j,\delta})$ uniformly converges  to $\f$ as $j \to + \infty$, hence 
 $$
 (dd^c \f \star\chi_\e)^n \geq \frac{1}{\delta + 1} (f^{1/n} \star\chi_\e)^n dV.
 $$
 
 We let finally $\de$ decrease to zero to obtain the desired lower bound $(ii)$.

\subsubsection{An extension of the main result}
 By approximating a given function $\f$ in the Cegrell class $\mathcal E (\Omega)$ by the decreasing sequence  $\f_j :=  \max (\f,- j)$  of bounded plurisubharmonic functions, we let the reader check that the main theorem holds when $\f$ merely  belongs to $\mathcal E (\Omega)$.

\subsection{Mixed inequalities}  \label{sec:mixed}

We now prove the Corollary of the introduction on mixed Monge-Amp\`ere measures of subsolutions, 
providing an alternative proof of  \cite[Lemma 1.2]{Kol03}:

\begin{pro}
Assume $\f_1,\ldots,\f_n $ are bounded \psh functions in $\Omega$, such that
$(dd^c \f_i)^n \geq f_i dV$, where $0 \leq f_i \in L^1(\Omega)$. Then 
$$
dd^c \f_1 \wedge \cdots \wedge dd^c \f_n \geq f_1^{1/n} \cdots f_n^{1/n} dV.
$$
\end{pro}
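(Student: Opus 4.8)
The plan is to deduce this from the equivalence $(i)\Leftrightarrow(ii)$ of the Main Theorem: first regularize, then apply a pointwise mixed-discriminant inequality to the regularizations, and finally pass to the limit using Bedford--Taylor theory.

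Since the assertion is local, fix a ball $B\Sub\Om$ and set $\f_{i,\e}:=\f_i\star\chi_\e$; these are smooth and \psh on $B_\e:=\{z\in B:\mrm{dist}(z,\bd B)>\e\}$ and decrease to $\f_i$ as $\e\downarrow 0$. Applying the implication $(i)\Rightarrow(ii)$ of the Main Theorem to each $\f_i$ (which satisfies $(dd^c\f_i)^n\ge f_i\,dV$ by hypothesis) gives, in the classical pointwise sense on $B_\e$,
$$
(dd^c \f_{i,\e})^n \ge (f_i^{1/n}\star\chi_\e)^n\, dV,\qquad i=1,\dots,n,
$$
equivalently $(\det A_i(z))^{1/n}\ge f_i^{1/n}\star\chi_\e(z)$, where $A_i(z)$ denotes the (positive semi-definite Hermitian) complex Hessian of $\f_{i,\e}$ at $z$.

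The algebraic heart of the matter is the following. For smooth \psh functions $u_1,\dots,u_n$ one has $dd^c u_1\we\cdots\we dd^c u_n=\mc D(A_1,\dots,A_n)\,dV$, where $\mc D$ is the mixed discriminant, i.e.\ the polarization of $\det$ normalized by $\mc D(A,\dots,A)=\det A$. We invoke the classical inequality
$$
\mc D(A_1,\dots,A_n)\ \ge\ (\det A_1)^{1/n}\cdots(\det A_n)^{1/n}
$$
for arbitrary positive semi-definite Hermitian matrices $A_1,\dots,A_n$ --- the polarized form of G\aa rding's inequality for the hyperbolic polynomial $\det$, of which Minkowski's inequality $\det(A+B)^{1/n}\ge(\det A)^{1/n}+(\det B)^{1/n}$ is the concavity counterpart. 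Applied pointwise and combined with the previous step, this yields on $B_\e$
$$
dd^c \f_{1,\e}\we\cdots\we dd^c\f_{n,\e}\ \ge\ \big(f_1^{1/n}\star\chi_\e\big)\cdots\big(f_n^{1/n}\star\chi_\e\big)\,dV.
$$

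It remains to let $\e\downarrow 0$ along a sequence. Fix $B'\Sub B$. Since $\f_{i,\e}\downarrow\f_i$ with $\f_i$ bounded \psh, the Bedford--Taylor continuity theorems \cite{BT76,BT82} give that $dd^c\f_{1,\e}\we\cdots\we dd^c\f_{n,\e}$ converges weakly on $B'$ to $dd^c\f_1\we\cdots\we dd^c\f_n$; on the other hand $f_i^{1/n}\in L^n_{\mrm{loc}}(B)$ since $f_i\in L^1$, so $f_i^{1/n}\star\chi_\e\to f_i^{1/n}$ in $L^n(B')$, and by H\"older the $n$-fold product converges to $f_1^{1/n}\cdots f_n^{1/n}$ in $L^1(B')$. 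Testing the last displayed inequality against non-negative functions in $C^0_c(B')$ and passing to the limit gives the desired bound on $B'$, hence on $\Om$ since $B'$ is arbitrary. Everything here is routine regularization except the pointwise mixed-discriminant inequality, which is the one genuinely non-elementary ingredient; a self-contained treatment would either appeal to G\aa rding's theory of hyperbolic polynomials or establish that inequality directly.
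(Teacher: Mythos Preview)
Your proof is correct and follows essentially the same route as the paper: regularize via $(i)\Rightarrow(ii)$ of the Main Theorem, apply the pointwise mixed-discriminant inequality for smooth psh functions (which the paper phrases as the classical concavity of $H\mapsto\log\det H$, citing \cite{HJ85}), and pass to the limit using Bedford--Taylor continuity on the left and $L^n$-convergence of $f_i^{1/n}\star\chi_\e$ on the right. The only cosmetic difference is that the paper first reduces to $f_i\in L^\infty$, but your direct $L^n_{\mrm{loc}}$ argument works just as well.
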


\begin{proof}
The inequality is classical when the functions $\f_i$ are smooth, and follows from the concavity
of $H \mapsto \log \det H$ (see \cite[Corollary 7.6.9]{HJ85}).

To treat the general case we replace each $\f_i$ by its convolutions $\f_i \star\chi_\e$.
We can always assume that $f_i \in L^{\infty} (\Omega)$.
Our main result insures that 
\[
(dd^c (\f_i \star \chi_\e))^n \geq (f_i^{1 \slash n} \star\chi_\e)^n dV,
\]
hence 
$$
dd^c (\f_1 \star \chi_\e) \wedge \cdots \wedge dd^c (\f_n \star \chi_\e) 
\geq (f_1^{1 \slash n} \star \chi_\e) \cdots (f_n^{1 \slash n} \star \chi_\e) dV.
$$

The left hand side converges weakly to $dd^c \f_1 \wedge \cdots \wedge dd^c \f_n$ by
Bedford-Taylor's continuity results \cite{BT76,BT82}, while 
$(f_i^{1 \slash n} \star \chi_\e)$ converges to $(f_i)^{1/n}$ in $L^n$ by Lebesgue convergence Theorem. Hence
$$
(f_1 \star \chi_\e)^{1/n} \cdots (f_n \star \chi_\e)^{1/n}
\text{  converges to }
(f_1)^{1/n} \cdots (f_n)^{1/n}
$$
 in $L^1$.
The conclusion follows.
\end{proof}

  We note conversely that these mixed inequalities yield an important implication in our main result.
  Assume indeed that $(dd^c \f)^n \geq f dV$ in the pluripotential sense.
Fix $f_1 = f$ and $\f_2=\ldots=\f_n=\rho_H$, where 
$\rho_H=\sum h_{jk} z_j \bar{z}_k$ with $H \in {\mathcal H}$, so that 
$(dd^c \f_i)^n \geq f_i dV$ with $f_2=\ldots=f_n=1$. 
It follows from the mixed inequalities above that
$$
\Delta_H \f = dd^c \f \wedge dd^c \f_2 \wedge \cdots \wedge dd^c \f_n \geq f^{1/n}.
$$

 One can alternatively proceed as follows : observe that
 \begin{eqnarray*}
 \lefteqn{(dd^c \f \star\chi_\e)^n(z) } \\
 & = &\int dd^c \f(z-w_1) \wedge \cdots \wedge dd^c \f(z-w_n)\chi_\e(w_1) \cdots\chi_\e(w_n) dV(w_1,\ldots w_n)  \\
 &\geq& \int f^{1/n}(z-w_1) \cdots f^{1/n}(z-w_n) dV(z)\chi_\e(w_1) \cdots\chi_\e(w_n) dV(w_1,\ldots w_n) \\
 &=& (f^{1/n} \star\chi_\e)^n (z) dV(z).
  \end{eqnarray*}

 \subsection{More general right hand side}  \label{sec:moregeneralrhs}
 
 There are several ways one can
  extend  our main observation. We note here the following:

 \begin{thm} \label{thm:moregeneralrhs}
 Assume  $\f $ is \psh and bounded.
  Fix  $g \in L^1$ and $h: \R \rightarrow \R$ convex.
 The following are equivalent :

 \smallskip
 
 (i) $ (dd^c \f)^n \geq e^{h(\f)+g} dV$ in the pluripotential sense;
 
 \smallskip
 
 (ii) $ (dd^c (\f \star \chi_\e))^n \geq e^{h(\f \star \chi_\e)+g \star \chi_\e}  dV$
 in the classical sense, for all $\e>0$;
 
 \smallskip
 
  (iii) $\Delta_H \f \geq  e^{h(\f)/n+g/n} $ in the sense of distributions,  for all $H \in {\mathcal H}$.
 \end{thm}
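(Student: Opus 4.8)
The plan is to reduce Theorem~\ref{thm:moregeneralrhs} to the Main Theorem by absorbing the convexity of $h$ into a Jensen inequality. The single new ingredient I would isolate first is the elementary observation that the function $\Phi(t,s):=e^{(h(t)+s)/n}$ is convex on $\R^2$: indeed $(t,s)\mapsto h(t)+s$ is convex (the sum of the convex function $h$ and a linear form) and $x\mapsto e^{x/n}$ is convex and nondecreasing, so the composition is convex. Applying Jensen's inequality to the probability measure $\chi_\e(w)\,dV(w)$ and the vector-valued map $w\mapsto(\f(z-w),g(z-w))$ then yields, pointwise a.e. on $\Om_\e$,
$$
\bigl(e^{(h(\f)+g)/n}\bigr)\star\chi_\e\ \geq\ e^{(h(\f\star\chi_\e)+g\star\chi_\e)/n},
$$
equivalently, writing $f:=e^{h(\f)+g}$, $(f^{1/n}\star\chi_\e)^n\geq e^{h(\f\star\chi_\e)+g\star\chi_\e}$. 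This Jensen-type estimate is the engine of all three implications, and using it correctly is the only real point of the proof.

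For $(i)\Rightarrow(ii)$ I would first note that, since $(dd^c\f)^n$ is a locally finite measure and $h(\f)$ is bounded, $(i)$ forces $f\in L^1_{\mathrm{loc}}(\Om)$, so after shrinking $\Om$ we may assume $f\in L^1(\Om)$. The Main Theorem, applied to the density $f$, gives $(dd^c(\f\star\chi_\e))^n\geq(f^{1/n}\star\chi_\e)^n\,dV$, and the Jensen estimate turns the right-hand side into $e^{h(\f\star\chi_\e)+g\star\chi_\e}\,dV$, which is exactly $(ii)$. For $(ii)\Rightarrow(i)$ I would let $\e\downarrow0$: the left-hand sides converge weakly to $(dd^c\f)^n$ by the Bedford--Taylor continuity theorem since $\f\star\chi_\e\downarrow\f$; for the right-hand sides, $\f\star\chi_\e$ stays in a fixed bounded interval on each compact subset (so $h(\f\star\chi_\e)\to h(\f)$ boundedly a.e., as $h$ is Lipschitz there), and $g\star\chi_\e\to g$ in $L^1$ while the Jensen domination $e^{g\star\chi_\e}\leq e^g\star\chi_\e$ supplies an $L^1$-convergent majorant, so $e^{h(\f\star\chi_\e)+g\star\chi_\e}\to e^{h(\f)+g}$ in $L^1$; passing to the limit in the inequality of measures gives $(i)$.

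The equivalence $(ii)\Leftrightarrow(iii)$ I would treat as in the Main Theorem via Lemma~\ref{lem:gav}: for the smooth function $\f\star\chi_\e$, the pointwise inequality $(dd^c(\f\star\chi_\e))^n\geq e^{h(\f\star\chi_\e)+g\star\chi_\e}\,dV$ is equivalent to $\Delta_H(\f\star\chi_\e)\geq e^{(h(\f\star\chi_\e)+g\star\chi_\e)/n}$ for every $H\in\mathcal H$. Assuming $(iii)$, i.e. $\Delta_H\f\geq e^{(h(\f)+g)/n}$ for all $H$, I convolve (using linearity of $\Delta_H$, so that $\Delta_H(\f\star\chi_\e)=(\Delta_H\f)\star\chi_\e$) and combine with the Jensen estimate to obtain the displayed trace inequality for $\f\star\chi_\e$, hence $(ii)$. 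Conversely $(ii)$ gives $\Delta_H(\f\star\chi_\e)\geq e^{(h(\f\star\chi_\e)+g\star\chi_\e)/n}$; letting $\e\downarrow0$, the left-hand side converges to $\Delta_H\f$ in the sense of distributions (constant-coefficient operator applied to $\f\star\chi_\e\to\f$ in $L^1_{\mathrm{loc}}$), while the right-hand side converges a.e. (along a subsequence) to $e^{(h(\f)+g)/n}$, and Fatou's lemma lets one pass the inequality to the distributional limit, yielding $(iii)$.

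I do not expect a serious obstacle: the genuine difficulty — the implication $(i)\Rightarrow(ii)$ for an $L^1$ density — has already been done in the Main Theorem, and everything new is the convexity of $h$ packaged as Jensen's inequality. The only mildly delicate points are the $L^1$-convergence of the right-hand sides as $\e\to0$, which is precisely what the Jensen domination $e^{g\star\chi_\e}\leq e^g\star\chi_\e$ is designed to handle, and the verification that $f\in L^1_{\mathrm{loc}}$, which is automatic once $(i)$ is assumed and is not needed for the implications $(ii)\Leftrightarrow(iii)$.
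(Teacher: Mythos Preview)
Your proposal is correct and follows exactly the route the paper indicates: reduce everything to the Main Theorem by packaging the convexity of $h$ and of $\exp$ into Jensen's inequality for the jointly convex function $(t,s)\mapsto e^{(h(t)+s)/n}$. The only small slip is in $(ii)\Rightarrow(i)$, where invoking $e^{g}\star\chi_\e$ as an $L^1$-convergent majorant tacitly presupposes $e^{g}\in L^1_{\mathrm{loc}}$, which the hypothesis $g\in L^1$ does not guarantee; the Fatou argument you already use for $(ii)\Rightarrow(iii)$ handles this passage to the limit without that extra assumption.
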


 The proof is very close to what we have done above, using  the convexity of $\exp$ and $h$
 through Jensen's inequality. 
 We leave the details to the reader.

\end{document}